\newtheorem{Theorem}{Theorem}[section]
\newtheorem{Proposition}[Theorem]{Proposition}
\theoremstyle{definition}
\newtheorem{Definition}[Theorem]{Definition}
 \def\C{\mathbb{C}} 
\def\ei{\mathbf{i}}
\def\ej{\mathbf{j}}
\def\ek{\mathbf{k}}
\def\H{\mathbb{H}}
\def\M{\mathcal{M}} 
\def\R{\mathbb{R}}
\DeclareMathOperator{\Ker}{Ker}
\DeclareMathOperator{\Sc}{Sc} 
\newcommand{\pic}[4]{\vspace{1ex}\setlength{\unitlength}{.05\textwidth}
\begin{picture}(0,0)(0,0)
\put(#2){\includegraphics[#4]{#1.jpg}} 
\end{picture}\vspace*{#3cm}\vspace{1ex}}
\begin{document}

\begin{center}

  \textbf{\Large Quaternionic metamonogenic functions\\ in the unit disk}
\\[2ex]
  {J.~Morais$^{a}$ and R.~Michael Porter$^{b}$}\\[1ex]
  {$^{a}$Department of Mathematics, ITAM, R\'io Hondo \#1,\\
    Col. Progreso Tizap\'an, 01080 Mexico City, Mexico. \\ E-mail:
    joao.morais@itam.mx \\ $^{b}$Department of Mathematics,
    CINVESTAV-Quer\'etaro,\\ Libramiento Norponiente \#2000, Fracc. Real
    de Juriquilla.\\   Santiago de Quer\'etaro, C.P. 76230 Mexico}
\end{center}


 \begin{abstract}
   We construct a set of quaternionic metamonogenic functions (that is, in $\Ker(D+\lambda)$ for diverse $\lambda$)
     in the unit disk, such that every metamonogenic function is
     approximable in the quaternionic Hilbert module $L^2$ of the
     disk. The set is orthogonal except for the small subspace of elements of orders zero and one. These functions are used to express time-dependent solutions of the
     imaginary-time wave equation in the polar coordinate system.
\end{abstract}
 
\textbf{Keywords:}
Quaternionic analysis, Bessel functions, quaternionic functions, Moisil-Teodorescu operator.


\section{Introduction}

We consider the first-order partial differential quaternionic operator
(sometimes called the Moisil-Teodorescu operator) $D+\lambda$
($\lambda \in \R\setminus\{0\}$) in planar domains, where
\begin{equation} \label{eq:defD}
D = \frac{\partial}{\partial x} \ei + \frac{\partial}{\partial y} \ej.
\end{equation}
Here $\ei$ and $\ej$ are two of the three basic quaternionic units,
and the Dirac operator $D$ acts on the left on smooth
quaternion-valued functions of a complex variable $z=x+iy$. This is a
special case of Clifford-type operators for which there is a vast
literature covering functions defined on spaces of diverse
dimensions, beginning with W.\ Hamilton \cite{WRH1853}, continued by
R.\ Fueter \cite{Fueter1940,Fueter1949} and followed by many others
including
\cite{Abul-EzConstales1990,BrackxDelangheSommen1982,CacaoFalcaoMalonek2011,Delanghe2007,Delanghe2009,GuerlebeckHabethaSproessig2008,GuerlebeckHabethaSproessig2016,MoraisHabilitation2019}.

Functions defined in $\R^2$ taking values in Clifford algebras of
dimension $>2$ have been relatively less investigated. The case
$\R^2 \to\R^4 \cong \H$ was considered in \cite{LuPeRoSha2013}, where
a detailed investigation of $D+\lambda$ was carried out for
quaternion-valued functions in the particular situation of elliptical
domains. In \cite{MoraisPorter2021}, the authors constructed a
one-parameter family of reduced quaternion-valued ($\R^3$-valued)
functions of a pair of real variables lying in an ellipse, termed
$\lambda$-metamonogenic Mathieu functions. Returning to the context
$\R^2 \to\H$, we consider here the case
of functions defined in the unit disk employing Bessel functions of
the first kind in place of Mathieu functions. We produce a set of
metamonogenic functions (that is, in $\Ker(D+\lambda)$ for diverse
$\lambda$), which is orthogonal in the unit disk for orders $\ge2$ and
in a certain sense complete in $\Ker(D+\lambda)\cap L^2$ for every
$\lambda$. As an application, in the final section we use these
functions to express time-dependent solutions of the imaginary-time
wave equation in the disk.


\section{Preliminaries\label{sec:preliminaries}} 

\subsection{Metamonogenic functions}


We consider the quaternionic operator $D$ defined by
\eqref{eq:defD}. This is interpreted as follows, in fairly standard
notation and terminology, in which $z=x+iy$ is a complex number, and a
quaternion \cite{K2003,MoraisGeorgievSproessig2014} is notated as
$a =a_0+ a_1\ei+ a_2\ej+ a_3\ek$. Here $a_m\in\R$ and $\ei,\ej,\ek$
are the quaternionic imaginary units satisfying
$\ei^2 = \ej^2 = \ek^2 = \ei \ej \ek = -1$. The set of real
quaternions $\H=\H(\R)$ is naturally identified with $\R^4$, which
determines the usual component-wise addition and also induces the
absolute value on $\H$.  Thus $D$ acts on $\H$-valued functions
\begin{eqnarray*}
f(x,y) = f_0 (x,y) + f_1(x,y) \ei +
     f_2(x,y) \ej + f_3(x,y) \ek,
\end{eqnarray*}
defined in domains in the complex plane $\C$ applying the quaternionic
multiplication rules, in principle, on the left or right, giving $Df$
or $fD$. We will only consider the operator acting from the left, as
the other case is analogous. 

Let $\Omega$ be a domain in $\mathbb{R}^2$ (open and connected). Let
$L^2(\Omega)=L^2(\Omega,\H)$ denote the space of all $\H$-valued
functions $f\colon\Omega\to\H$ such that the components
$f_m$ ($m=0,1,2,3$) are in the usual $L^2(\Omega,\R)$. It is
easily seen that $L^2(\Omega)$ is naturally a right $\H$-linear module
and admits the $\H$-valued right inner product
\begin{eqnarray} \label{eq:InnerProduct}
\langle f, g \rangle_\H =
  \int \!\!\!\int_{\Omega} \;  \overline{f(x,y)} \, g(x,y) \,dx\,dy
\end{eqnarray}
for $f, g \in L^2(\Omega)$.
Thus $L^2(\Omega)$ is a quaternionic right Hilbert module with the
associated norm
$\|f\|_2 = \langle f, f\rangle^{1/2}_\H = \langle f, f
\rangle^{1/2}_\R$, where
$\langle f, g \rangle_\R = \Sc \langle f, g \rangle_\H$ coincides
with the usual $L^2$-norm for $f$, viewed as an $\R^4$-valued function
in $\Omega$ \cite{GS1989,GS1997}.

For functions taking values in the $2$-dimensional subspace
$\R\ei\oplus\R\ej\subseteq\H$, $D$ echoes the classical Cauchy-Riemann operator
$2\partial/\partial \overline{z}=\partial/\partial x+i\partial/\partial y$, but it sends such
functions to the complementary subspace $\R\oplus\R\ek\subseteq\H$.

As usual, $\Delta = \partial^2/\partial x^2 + \partial^2/\partial y^2$
will denote the Laplace operator in $\R^2$.
\begin{Definition} \label{Main_definitions} Let $\Omega\subseteq\R^2$. Given
  $\lambda \in \R \setminus \{0\}$, a function  $f\in C^2(\Omega,\R)$ is
  said to be \textit{$\lambda$-metaharmonic} when
  \[ (\Delta+\lambda^2)f=0 .\]
When $f\in C^1(\Omega,\H)$ and
\[ (D+\lambda)f=0 ,\] one says that $f$ is \textit{left
  $\lambda$-metamonogenic}.
 \end{Definition}
 
We thus have the spaces of left $\lambda$-metamonogenic functions
\begin{equation*}
\M(\Omega;\lambda) = \Ker (D + \lambda) \subseteq C^1(\Omega,\H)
\end{equation*}
and
\begin{equation*}
\M_2(\Omega;\lambda) = \M(\Omega;\lambda) \cap
L^2(\Omega).
\end{equation*}
It is well known \cite{Vekua2013} that metaharmonic functions are of
class $C^\infty$, and so by the following factorization of the
Laplacian via $D$ (cf.\ \cite[Section CVII]{WRH1853} and
\cite{Gurl1986}), metamonogenic functions are of class $C^\infty$
also.

\begin{Proposition}  \label{prop:factorhelmholtz}
$(D+\lambda)(D-\lambda) = -(\Delta+\lambda^2)$ for $\lambda \in \R \setminus \{0\}$.
\end{Proposition}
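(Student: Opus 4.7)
The plan is to reduce the identity to the single computation $D^2=-\Delta$ and then expand formally. Since $\lambda\in\R$ is central in $\H$, it commutes with every quaternion coefficient and with the partial derivatives, so when acting on a $C^2$ function $f$ we get
\[
(D+\lambda)(D-\lambda)f = D^2f-\lambda Df+\lambda Df-\lambda^2 f = D^2f-\lambda^2 f,
\]
with no cross terms surviving. Thus the whole proposition collapses to verifying that $D^2=-\Delta$ on $C^2(\Omega,\H)$.

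To compute $D^2$, I would expand $(\partial_x\ei+\partial_y\ej)(\partial_x\ei+\partial_y\ej)$, being careful that $\ei$ and $\ej$ act on the left by quaternionic multiplication and that the product of two such left actions is again a left action with the coefficients composed in the correct order. The expansion yields the four terms
\[
\ei^2\partial_x^2 \;+\; \ei\ej\,\partial_x\partial_y \;+\; \ej\ei\,\partial_y\partial_x \;+\; \ej^2\partial_y^2.
\]
Applying the defining relations $\ei^2=\ej^2=-1$ and $\ei\ej=\ek=-\ej\ei$, the first and last terms give $-\partial_x^2-\partial_y^2$, while the two middle terms combine to $(\ei\ej+\ej\ei)\partial_x\partial_y=0$ once we invoke equality of mixed partials (which is legitimate since $f\in C^2$, by Proposition~\ref{prop:factorhelmholtz}'s implicit smoothness hypothesis, or simply by assuming we apply the operator to $C^2$ functions). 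Hence $D^2f=-\Delta f$.

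Combining the two steps gives $(D+\lambda)(D-\lambda)f=-\Delta f-\lambda^2 f=-(\Delta+\lambda^2)f$, which is the claimed identity.

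There is no real obstacle here; the only subtle point worth flagging is the non-commutativity of quaternionic multiplication, which makes it essential to keep $\ei\ej$ and $\ej\ei$ distinct before symmetrizing via the Clairaut equality $\partial_x\partial_y=\partial_y\partial_x$. Note also that exactly the same argument shows that $(D-\lambda)(D+\lambda)=-(\Delta+\lambda^2)$, so the two factors commute, which is the algebraic reason the factorization is useful for deducing $C^\infty$-regularity of metamonogenic functions from that of metaharmonic ones.
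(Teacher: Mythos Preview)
Your proof is correct: the expansion $(D+\lambda)(D-\lambda)=D^2-\lambda^2$ using the centrality of $\lambda\in\R$, followed by the computation $D^2=-\Delta$ via $\ei^2=\ej^2=-1$ and $\ei\ej+\ej\ei=0$ together with equality of mixed partials, is exactly the standard argument. The paper itself does not supply a proof of this proposition; it merely records the factorization with references to \cite{WRH1853} and \cite{Gurl1986}, so there is nothing to compare against beyond noting that your direct computation is the expected one.
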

 
In polar coordinates  $x=\rho\cos\theta$, $y=\rho\sin\theta$, one has
\begin{equation}  \label{eq:polar}
  D_{r,\theta} = \bigg(\cos\theta\frac{\partial}{\partial\rho } +
  \frac{\sin\theta}{\rho}\frac{\partial}{\partial\theta}\bigg)\ei
 +  \bigg(\sin\theta\frac{\partial}{\partial\rho } -
  \frac{\cos\theta}{\rho}\frac{\partial}{\partial\theta}\bigg)\ej,
\end{equation}
and the Helmholtz operator in polar coordinates is
\begin{equation}
  \Delta_{\rho,\theta} + \lambda^2 = \frac{\partial^2}{\partial\rho ^2} + \frac{1}{\rho} \frac{\partial}{\partial\rho } + \frac{1}{\rho^2} \frac{\partial^2}{\partial \theta^2} + \lambda^2.
\end{equation}

From Proposition \ref{prop:factorhelmholtz}, it is clear that
the components of any $f\in\M(\Omega;\lambda)$ are $\lambda$-metaharmonic.
The equation $(D+\lambda)f=0$ is equivalent to the system
of partial differential equations
\begin{align}  
  \frac{\partial f_1}{\partial x} +  \frac{\partial f_2}{\partial y}
   - \lambda f_0 &=0, \nonumber \\
  -\frac{\partial f_0}{\partial x} -  \frac{\partial f_3}{\partial y}
   - \lambda f_1  &=0, \nonumber\\
  \frac{\partial f_3}{\partial x} -  \frac{\partial f_0}{\partial y}
   - \lambda f_2  &=0, \nonumber\\
  -\frac{\partial f_2}{\partial x} +  \frac{\partial f_1}{\partial y }
  - \lambda f_3 &=0.  \label{eq:metasystem}
\end{align}

From this the following is immediate.
\begin{Proposition} \label{prop:f1f2}
  Given any two scalar $\lambda$-metaharmonic functions $f_1$, $f_2$ in any domain
  $\Omega\subseteq\C$, there are unique
  ($\lambda$-metaharmonic) functions $f_0$, $f_3$ such that $f_0+f_1\ei+f_2\ej+f_2\ek$ is
  $\lambda$-metamonogenic.
\end{Proposition}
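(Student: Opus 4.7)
The plan is to read off $f_0$ and $f_3$ directly from the system \eqref{eq:metasystem}, using the fact that $\lambda \neq 0$. Specifically, the first and fourth equations of \eqref{eq:metasystem} can be solved algebraically for $f_0$ and $f_3$ respectively, yielding
\begin{equation*}
f_0 = \frac{1}{\lambda}\Bigl(\frac{\partial f_1}{\partial x}+\frac{\partial f_2}{\partial y}\Bigr),\qquad
f_3 = \frac{1}{\lambda}\Bigl(-\frac{\partial f_2}{\partial x}+\frac{\partial f_1}{\partial y}\Bigr).
\end{equation*}
Since any $\lambda$-metamonogenic extension $f_0+f_1\ei+f_2\ej+f_3\ek$ must satisfy \eqref{eq:metasystem} and hence these two identities, uniqueness is immediate. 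It remains to verify existence, namely that these formulas define a quaternion-valued function satisfying all four equations of \eqref{eq:metasystem} and having $\lambda$-metaharmonic components.

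For existence, the first and fourth equations are built in, so I would substitute the formulas for $f_0,f_3$ into the left-hand sides of the second and third equations and reduce. In the second equation, the cross-partials $\partial^2 f_2/\partial x\partial y$ and $\partial^2 f_2/\partial y\partial x$ cancel by Schwarz's theorem (valid since $f_1,f_2$ are automatically $C^\infty$ by the regularity of metaharmonic functions noted before Proposition \ref{prop:factorhelmholtz}), leaving $-\partial f_0/\partial x-\partial f_3/\partial y = \lambda^{-1}\Delta f_1$. Using the metaharmonic equation $\Delta f_1=-\lambda^2 f_1$ gives exactly $\lambda f_1$, so the second equation of \eqref{eq:metasystem} holds. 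The third equation follows by an entirely analogous cancellation and use of $\Delta f_2=-\lambda^2 f_2$.

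Finally, the components $f_0$ and $f_3$ are themselves $\lambda$-metaharmonic: since $\Delta$ commutes with $\partial/\partial x$ and $\partial/\partial y$, applying $\Delta$ to the defining formulas produces $\Delta f_0 = \lambda^{-1}(\partial_x\Delta f_1 + \partial_y\Delta f_2) = -\lambda^2 f_0$, and similarly for $f_3$. (Alternatively, this is automatic from the factorization in Proposition \ref{prop:factorhelmholtz} once the extension has been shown to be metamonogenic.)

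There is no real obstacle here; the entire argument is a short algebraic manipulation, and the only point requiring any care is the implicit appeal to $C^2$ regularity to justify equality of mixed partials. Since $f_1,f_2$ are metaharmonic they are in fact $C^\infty$, so this step is harmless.
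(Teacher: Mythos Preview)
Your proof is correct and follows exactly the same route as the paper: define $f_0,f_3$ by the formulas forced by the first and fourth equations of \eqref{eq:metasystem}, then verify the remaining two equations using the metaharmonicity of $f_1,f_2$. The only blemish is a harmless sign slip in the intermediate expression $-\partial f_0/\partial x-\partial f_3/\partial y$ (it equals $-\lambda^{-1}\Delta f_1$, not $\lambda^{-1}\Delta f_1$), but your conclusion $\lambda f_1$ is the right one.
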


Indeed, take
\begin{align*}
   f_0  = \frac{1}{\lambda}\big(\frac{\partial f_1}{\partial x} +
                 \frac{\partial f_2}{\partial y}\big) ,\quad
   f_3  = \frac{1}{\lambda}\big(\frac{\partial f_1}{\partial y} -
                   \frac{\partial f_2}{\partial x} \big),
\end{align*}
and observe that the system \eqref{eq:metasystem} is satisfied.
Similarly, given $f_1$, $f_2$ one has unique functions $f_0$, $f_3$
completing the components of a $\lambda$-metamonogenic function. One
may think of the relationship of pairs $(f_1,f_2)$ and $(f_0,f_3)$ as
a generalization of the notion of harmonic conjugates.


\section{Quaternionic metamonogenic functions} \label{Quaternionic_functions}

This section introduces a family of $\lambda$-metamonogenic
functions in the real Hilbert space $L^2$ of the unit disk, which is
the object of study of this paper.
 
The factorization of Proposition \ref{prop:factorhelmholtz} suggests that
quaternionic $\lambda$-meta\-mo\-no\-genic functions should play a
role for the Laplace operator $\Delta$, similar to the usual metaharmonic
functions in two variables for the corresponding Helmholtz operator
\cite{Vekua1968}.


\subsection{A class of $\lambda$-metamonogenic functions\label{subsec:RQMfunctions}}

First we define a continuous family of quaternionic metamonogenic
functions.  Let $J_n(z)$, $z\in\C$ denote the $n$-th Bessel function
of the first kind, $n=0,1,2,\dots$  \cite{JeffreyDai2008}.  We recall
that
\begin{align} \label{eq:bessel0}
  J_0(0) &=1,\quad J_n(0)=0\;\, (n\not=0), \nonumber \\
  J_1'(0) &= \frac{1}{2},\quad J_n'(0)=0\;\, (n\not=1).
\end{align} 
 
\begin{Definition} \label{Definition_RQM functions} Let $n \geq 0$ and
  $\lambda\in\R\setminus\{0\}$. For $z=x+iy=\rho e^{i\theta}\in\C$, the
  \textit{$n$-th standard $\lambda$-metamonogenic function} is
\begin{align*}
  F_n[\lambda](z) &= \big( J_n'(\lambda\rho ) +
    \frac{n}{\lambda\rho }J_n(\lambda\rho ) \big) \cos(n-1)\theta  \\
    & \quad\quad + \big(J_n(\lambda\rho )\cos n\theta\big)\, \ei \ +
   \  \big(J_n(\lambda\rho )\sin n\theta\big) \, \ej \\
  & \quad\quad- \big(( J_n'(\lambda\rho ) +  \frac{n}{\lambda\rho }J_n(\lambda\rho ) ) \sin(n-1)\theta \big)  \, \ek  \quad {\rm if} \ z\not=0
\end{align*}
and for $z=0$ the limiting value,
\begin{align*}
F_0[\lambda](0) = \ei,\quad F_1[\lambda](0) = 1 ,\quad   F_n[\lambda](0) = 0 \quad (n \geq2).
\end{align*}
\end{Definition}
In particular, $F_0[\lambda](z) = -J_1(\lambda\rho ) \cos\theta + J_0(\lambda\rho ) \, \ei
\ - J_1(\lambda\rho ) \sin\theta \, \ek$ because of the second of the recurrence relations
\begin{align}  
   \frac{2n}{z}J_n(z) &= J_{n-1}(z) + J_{n+1}(z), \nonumber \\
   2J_n'(z) &= J_{n-1}(z) - J_{n+1}(z), \label{eq:besselrecurrence1}
\end{align}
with $J_{-1}(z) = -J_1(z)$.

Note that the $\ei$ and $\ej$ components of $ F_n[\lambda](z)$ are
the classical solutions $J_n(\lambda\rho )\cos n\theta$,
$J_n(\lambda\rho )\sin n\theta$ for the Helmholtz equation in
polar coordinates, which are indeed complete in the space of all solutions in $L^2(\Omega_0,\R)$, where $\Omega_0=\{z\in\C\colon\ |z|<1\}$ denotes the unit disk in the
complex plane \cite{Vekua2013}. It follows directly from Proposition 
\ref{prop:f1f2} that all $F_n[\lambda](z)$ are
$\lambda$-metamonogenic. We also note that $F_n[\lambda]$ may be constructed
as
\begin{equation}
F_n[\lambda] = F_n^+[\lambda]\ei +  F_n^-[\lambda]\ej,
\end{equation}
in terms of the reduced-quaternionic valued functions  
\begin{align*}
   F^\pm_n[\lambda](z) &=  J_n(\lambda\rho )\Phi^\pm_n(z) -
   (\cos \theta \ei + \sin \theta\ej) J_n'(\lambda\rho )\Phi^\pm_n(z) \\
  &\quad\quad  \mp \frac{n}{\lambda\rho }(\sin \theta \ei - \cos \theta\ej)J_n(\lambda\rho )\Phi^\mp_n(z) ,
\end{align*}
where we write $\Phi^+_n(z)=\cos n\theta$,  $\Phi^-_n(z)=\sin n\theta$.
  

\subsection{Basic metamonogenics}

Next we introduce a special subset of the $\lambda$-metamonogenic
functions defined in the previous section. It is well known
\cite{JeffreyDai2008} that $J_n$ has a countable collection of
simple real zeros $j_{n,m}$,
\begin{equation*}
0< j_{n,1}< j_{n,2}< \cdots.
\end{equation*}

The basic metamonogenic functions are defined by
\begin{equation} \label{def:Fn}
  F_{n,m}(z) = F_n[j_{n,m}](z).
\end{equation}
for $n\ge0$, $m\ge1$.  Thus
\begin{equation} \label{eq:Fnmproperty}
  (D+j_{n,m})F_{n,m} = 0.
\end{equation}

Some examples of $F_{n,m}$ in $\Omega_0$ are given in Figure \ref{Figure-examples}. Our main result is as follows.
 \begin{figure}[t!]
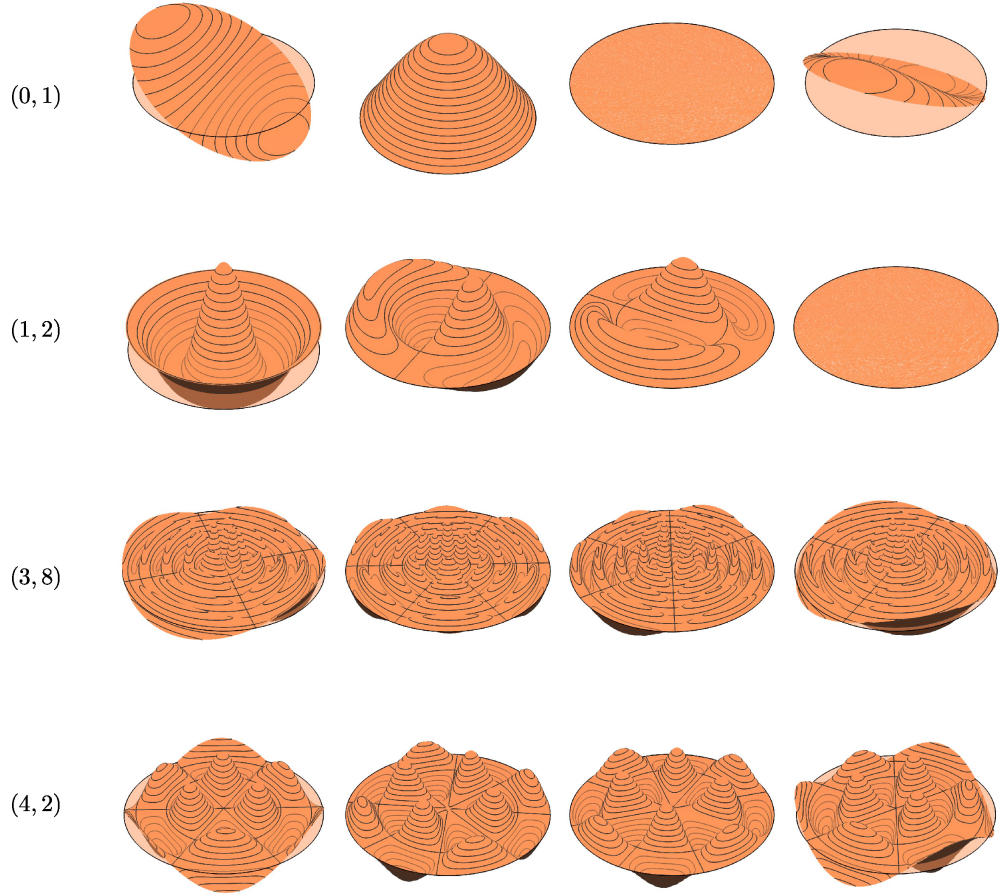

  \pic{figmonobasic}{.6,-17.0}{11}{scale=.16}
  \caption{The functions $F_{n,m}$ for assorted values of $(n, m)$. The scalar parts 
  are shown in the left column. The symmetries due to the presence of the functions $\Phi^{\pm}_n$ are clearly visible. \label{Figure-examples}}
\end{figure}

\begin{Theorem} \label{Main_theorem} (i) Let $(n_1,m_1)\not=(n_2,m_2)$.
  If $\{n_1,n_2\}\not=\{0,1\}$, then  
   \begin{align}    \label{eq:almostorthog1}
    \langle F_{n_1,m_1},\  F_{n_2,m_2} \rangle_{\H} = 0  ,
  \end{align}
while   
    \begin{align}
    \langle F_{0,m_1},\  F_{1,m_2} \rangle_{\H} &= -2\pi \, \frac{J_1(j_{0,m_1}) J_0(j_{1,m_2})}{j_{0,m_1}-j_{1,m_2}} \, \ei, \label{eq:almostorthog2} \\
    \langle F_{1,m_1},\  F_{0,m_2} \rangle_{\H} &= -2\pi \, \frac{J_0(j_{1,m_1}) J_1(j_{0,m_2})}{j_{1,m_1}-j_{0,m_2}} \, \ei. \label{eq:almostorthog22}
  \end{align}
 (ii)  The norms of these metamonogenic functions are given by
  \begin{equation}     \label{eq:norms}
    \| F_{n,m} \|^2_2 = 2\pi J^2_{n-1}(j_{n,m})
        =2\pi J^2_{n+1}(j_{n,m}).
  \end{equation}
  (iii) Let $f \in \M_2(\Omega_0;\lambda)$ where
  $\lambda\in\R\setminus\{0\}$. Then $f$ is in the closed subspace
  of the right quaternionic Hilbert module $L^2(\Omega_0)$ spanned by $\{F_{n,m}\colon n\ge0, m\ge1\}$; that is, there are $c_{n,m} \in \H$ such that
  \begin{equation*}
  f(z) = \sum_{n=0}^{\infty} \sum_{m=1}^{\infty} F_{n,m}(z) \, c_{n,m}.
  \end{equation*}
 \end{Theorem}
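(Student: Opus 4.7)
Parts (i) and (ii) reduce to direct computation in polar coordinates. After expanding the quaternionic product $\overline{F_{n_1,m_1}}F_{n_2,m_2}$ and writing each resulting term as (radial function)$\times$(product of two sines/cosines in $\theta$), the $\theta$-integration over $[0,2\pi]$ kills every term whose two angular frequencies differ. For part (i) with $\{n_1,n_2\}\ne\{0,1\}$, only diagonal angular terms survive, leaving radial integrals $\int_0^1\rho J_n(j_{n,m_1}\rho)J_n(j_{n,m_2}\rho)\,d\rho$ and $\int_0^1\rho J_{n-1}(j_{n,m_1}\rho)J_{n-1}(j_{n,m_2}\rho)\,d\rho$, both of which vanish for $m_1\ne m_2$ by the classical Lommel identity (using $J_n(j_{n,m_i})=0$). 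The exceptional case $\{n_1,n_2\}=\{0,1\}$ is exactly where the scalar part of $F_0$ (which carries $\cos\theta$ through $J_{-1}(z)\cos(-\theta)=-J_1(z)\cos\theta$) collides in angular frequency with the $\ei$-part of $F_1$, but with mismatched Bessel indices, yielding the explicit residue in \eqref{eq:almostorthog2}-\eqref{eq:almostorthog22}. For part (ii), the key simplification is $|F_n[\lambda](z)|^2 = (J_n'+\tfrac{n}{\lambda\rho}J_n)^2+J_n^2 = J_{n-1}^2(\lambda\rho)+J_n^2(\lambda\rho)$, using $\cos^2+\sin^2=1$ together with the recurrence $J_n'+\tfrac{n}{z}J_n=J_{n-1}$. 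Both radial integrals at $\alpha=j_{n,m}$ evaluate to $\tfrac12 J_{n-1}^2(j_{n,m})$ via $\int_0^1\rho J_\nu^2(\alpha\rho)\,d\rho = \tfrac12[J_\nu^2(\alpha)-J_{\nu-1}(\alpha)J_{\nu+1}(\alpha)]$, and their sum gives \eqref{eq:norms}; the alternate form $2\pi J_{n+1}^2(j_{n,m})$ follows from $J_{n-1}(j_{n,m})+J_{n+1}(j_{n,m})=\tfrac{2n}{j_{n,m}}J_n(j_{n,m})=0$.

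For part (iii), my plan is the contrapositive: suppose $f\in\M_2(\Omega_0;\lambda)$ is orthogonal in $\langle\cdot,\cdot\rangle_{\H}$ to every $F_{n,m}$, and deduce $f\equiv0$. First, expand $f(\rho,\theta)=\sum_{k\ge0}[A_k(\rho)\cos k\theta+B_k(\rho)\sin k\theta]$ with $\H$-valued radial coefficients. Since each real component of $f$ is $\lambda$-metaharmonic (Proposition \ref{prop:factorhelmholtz}) and regular at the origin, each real component of $A_k,B_k$ must be a real scalar multiple of $J_k(\lambda\rho)$, and Proposition \ref{prop:f1f2} links these multiples algebraically across the four quaternionic components. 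Next, because $F_{n,m}$ only involves angular harmonics $n-1$ and $n$, the $\theta$-integration in $\langle f,F_{n,m}\rangle_{\H}=0$ couples only to $A_{n-1},B_{n-1},A_n,B_n$, producing for each $(n,m)$ a single $\H$-valued radial identity
\[\int_0^1 \rho\bigl[J_{n-1}(j_{n,m}\rho)\,P_n(\rho) + J_n(j_{n,m}\rho)\,Q_n(\rho)\bigr]\,d\rho = 0, \quad m\ge 1,\]
where $P_n,Q_n$ are $\H$-linear combinations of the (now one-parameter) radial functions constituting $A_{n-1},B_{n-1},A_n,B_n$. Invoking completeness of $\{J_n(j_{n,m}\rho)\}_{m\ge1}$ in $L^2([0,1],\rho\,d\rho)$ after decoupling $P_n$ from $Q_n$ should force every scalar coefficient of every $A_k,B_k$ to vanish.

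The main obstacle is precisely the decoupling of $P_n$ and $Q_n$ in the displayed identity: the two Bessel families $\{J_n(j_{n,m}\rho)\}_m$ and $\{J_{n-1}(j_{n,m}\rho)\}_m$ are \emph{not} mutually orthogonal (since the $j_{n,m}$ are zeros of $J_n$, not of $J_{n-1}$), so the vanishing of the combined integral for all $m$ does not by itself force $P_n=Q_n=0$. The metamonogenic constraint $(D+\lambda)f=0$ must supply exactly the missing algebraic relation between $P_n$ and $Q_n$ to close the argument, and verifying this will require careful bookkeeping of the Bessel recurrences $J_n'\pm\tfrac{n}{z}J_n$, together with the special low-mode conventions $J_{-1}=-J_1$ and the tailored definitions of $F_{0,m}$ and $F_{1,m}$ that already made part (i) exceptional at $\{n_1,n_2\}=\{0,1\}$.
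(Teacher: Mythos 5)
Your treatment of parts (i) and (ii) is essentially the paper's own computation: expand $\overline{F_{n_1,m_1}}\,F_{n_2,m_2}$, let the $\theta$-integration kill every term whose angular frequencies do not match (the exceptional contribution, of frequency $n_1+n_2-1$, surviving only when $n_1+n_2=1$, i.e.\ $\{n_1,n_2\}=\{0,1\}$, and only in the $\ei$-component), and reduce to radial Lommel/orthogonality integrals. Two details are even slightly cleaner than in the paper: the pointwise identity $|F_n[\lambda]|^2=J_{n-1}^2(\lambda\rho)+J_n^2(\lambda\rho)$ leads directly to \eqref{eq:norms}, and you justify explicitly that $\int_0^1\rho\,J_{n-1}(j_{n,m_1}\rho)J_{n-1}(j_{n,m_2}\rho)\,d\rho=0$ even though the $j_{n,m_i}$ are zeros of $J_n$ rather than of $J_{n-1}$, a point the paper glosses by citing \eqref{eq:besselorthog}. (You still owe the actual evaluation of \eqref{eq:almostorthog2}--\eqref{eq:almostorthog22}, but that is a routine Lommel computation.)

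Part (iii), however, is not proved: your text stops at ``the main obstacle is precisely the decoupling of $P_n$ and $Q_n$'' and only expresses the hope that the metamonogenic constraint supplies the missing relation. That is the heart of the argument, and the difficulty is genuine: if $\lambda$ happens to equal some zero $j_{n,m_0}$, then both moments $\int_0^1\rho\,J_n(\lambda\rho)J_n(j_{n,m}\rho)\,d\rho$ and $\int_0^1\rho\,J_{n-1}(\lambda\rho)J_{n-1}(j_{n,m}\rho)\,d\rho$ vanish for every $m\ne m_0$, so your family of radial identities collapses to a single equation and cannot separate $P_n$ from $Q_n$ without invoking the remaining quaternionic components and the constraint in an essential, as yet unwritten, way. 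The paper closes this step by a device that avoids both the decoupling problem and the resonance $\lambda=j_{n,m}$: writing $f_0=\lambda^{-1}(\partial_x f_1+\partial_y f_2)$ and $f_3=\lambda^{-1}(\partial_y f_1-\partial_x f_2)$ (Proposition \ref{prop:f1f2}), it substitutes these into $\Sc\langle f,F_{n,m}\rangle_\H=0$, integrates by parts so that only $f_1,f_2$ and $\Delta f_1,\Delta f_2$ appear against $J_n(j_{n,m}\rho)\Phi_n^\pm$, and then uses $\Delta f_i=-\lambda^2 f_i$; the hypothesis becomes $(\lambda^2+j_{n,m}^2)$ times the ordinary Fourier--Bessel coefficients of $f_1,f_2$, and since $\lambda^2+j_{n,m}^2>0$ these coefficients vanish for all $n>0$, with no exceptional $\lambda$. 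Completeness of $\{J_n(j_{n,m}\rho)\Phi_n^\pm\}$ then leaves only series in $J_0(j_{0,m}\rho)$ for $f_1,f_2$, which are eliminated by testing against $F_{0,m'}$ with $f_0,f_3$ written out explicitly. To complete your version you would either have to carry out the deferred bookkeeping in full (including the resonant case and the low modes $n=0,1$) or switch to this integration-by-parts argument.
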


\begin{proof} 
  Since  $F_{n,m}$ is continuous in fact in the whole plane, it is clearly
  in $L^2(\Omega_0)$.   The proof divides naturally into parts.\\

\noindent\textit{(i) Orthogonality.} We must show that
\[ \int_{\Omega_0}  \overline{F_{n_1,m_1}(z)}\,F_{n_2,m_2}(z)\,dx\,dy =0
\]
whenever $(n_1,m_1)\not=(n_2,m_2)$ and $\{n_1,n_2\}\not=\{0,1\}$.  
We break down the integrand into quaternionic components as follows,
\begin{align*}
  &\overline{F_{n_1,m_1}(z)}\,F_{n_2,m_2}(z) \\
  &=
   ( A_1-B_1\ei-C_1\ej - D_1\ek )( A_2+B_2\ei+C_2\ej +D_2\ek ) \\
   &= A_1A_1 + B_1B_2 + C_1C_2 + D_1D_2 + (A_1B_2-B_1A_2-C_1D_2+D_1C_2)\ei \\
   &\quad + (A_1C_2 - C_1A_2 + B_1D_2 - D_1B_2)\ej + (-B_1C_2+C_1B_2+A_1D_2-D_1A_2)\ek.
\end{align*}
For convenience, let us write $j_1=j_{n_1,m_1}$,
$j_2=j_{n_2,m_2}$. With this notation, one finds after a great
deal of cancellation that 
\begin{align*}
  A_1A_2 &=  \big( J_{n_1}'(j_1 \rho) J_{n_2}'(j_2 \rho)  
    + \frac{n_2}{j_2\rho } J_{n_1}'(j_1\rho ) J_{n_2}(j_2 \rho)  
    + \frac{n_1}{j_1 \rho} J_{n_1}(j_1 \rho) J_{n_2}'(j_2 \rho) \\
         &\quad + \frac{n_1n_2}{j_1j_2 \rho^2} J_{n_1}(j_1 \rho) J_{n_2}(j_2 \rho)
            \big)\big(
        (\cos^2\theta)\Phi^+_{n_1}\Phi^+_{n_2}
     +  (\cos\theta\sin\theta)\Phi^+_{n_1}\Phi^-_{n_2} \\
     &\hspace{.38\textwidth}
     + (\sin\theta\cos\theta)\Phi^-_{n_1}\Phi^+_{n_2}
       + (\sin^2\theta) \Phi^-_{n_1}\Phi^-_{n_2} \big), \\[1ex]
  B_1B_2 &= J_{n_1}(j_1 \rho) J_{n_2}(j_2 \rho)\Phi^+_{n_1}\Phi^+_{n_2}, \\[1ex]
  C_1C_2 &= J_{n_1}(j_1 \rho) J_{n_2}(j_2 \rho)\Phi^-_{n_1}\Phi^-_{n_2}, \\[1ex]
  D_1D_2 &=   \big( J_{n_1}'(j_1 \rho) J_{n_2}'(j_2 \rho)  
    + \frac{n_2}{j_2\rho } J_{n_1}'(j_1 \rho) J_{n_2}(j_2 \rho)  
    + \frac{n_1}{j_1\rho } J_{n_1}(j_1 \rho) J_{n_2}'(j_2 \rho) \\
         &\quad + \frac{n_1n_2}{j_1j_2 \rho^2} J_{n_1}(j_1 \rho) J_{n_2}(j_2 \rho)
            \big)\big(
        (\sin^2\theta)\Phi^+_{n_1}\Phi^+_{n_2}
     - (\sin\theta\cos\theta)\Phi^+_{n_1}\Phi^-_{n_2} \\
     &\hspace{.38\textwidth}
     - (\cos\theta\sin\theta)\Phi^-_{n_1}\Phi^+_{n_2}
       + (\cos^2\theta) \Phi^-_{n_1}\Phi^-_{n_2} \big) .
\end{align*}
Now it is best to group the parts as follows: using $\Phi^+_{n_1}\Phi^+_{n_2} +
\Phi^-_{n_1}\Phi^-_{n_2}=\Phi^+_{n_1-n_2}$, first 
\begin{align*}
   A_1A_2 + D_1D_2 & = \big( J_{n_1}'(j_1 \rho) J_{n_2}'(j_1 \rho)  
    + \frac{n_2}{j_2\rho } J_{n_1}'(j_1\rho ) J_{n_2}(j_2 \rho) \\
          &\quad + \frac{n_1}{j_1 \rho} J_{n_1}(j_1 \rho) J_{n_2}'(j_2 \rho)
            + \frac{n_1n_2}{j_1j_2 \rho^2} J_{n_1}(j_1 \rho) J_{n_2}(j_2\rho )
            \big) \Phi^+_{n_1-n_2} \\
    B_1B_2+C_1C_2 &=   J_{n_1}(j_1 \rho) J_{n_2}(j_2 \rho) \Phi^+_{n_1-n_2} ,
\end{align*}
and then we can integrate,
\begin{align}
\langle F_{n_1,m_1},\  F_{n_2,m_2} \rangle_{\R} &= \nonumber
  \int_0^{2\pi}\int_0^1 (  A_1A_1 + B_1B_2 + C_1C_2 + D_1D_2 )\,\rho\,d\rho \,d\theta \\
  &=   \int_0^1 \big( \rho J_{n_1}'(j_1 \rho) J_{n_2}'(j_2 \rho)\nonumber
    + \rho J_{n_1}(j_1 \rho) J_{n_2}( j_2 \rho)    \\\nonumber
 &\quad \quad + \frac{n_2}{j_2}   J_{n_1}'(j_1 \rho) J_{n_2}(j_2 \rho) 
   + \frac{n_1}{j_1}  J_{n_1}(j_1 \rho) J_{n_2}'(j_2 \rho) \\ 
   &\quad\quad  + \frac{n_1n_2}{j_1j_2} \rho^{-1}
    J_{n_1}(j_1 \rho) J_{n_2}(j_2 \rho) \big) \,d\rho  
   \int_0^{2\pi} \Phi^+_{n_1\--n_2}\,d\theta. \label{eq:prod12}
\end{align}
For $n_1\not=n_2$ the $\theta$-integral is zero, and so is the scalar
product. Now suppose $n_1=n_2=n$, and consider the $\rho$-integral. We will use the 
following orthogonality property \cite{JeffreyDai2008} for Bessel functions
scaled by distinct zeros,
\begin{align} \label{eq:besselorthog}
  \int_0^1 J_n(j_{n,m_1} \rho) J_n(j_{n,m_2} \rho)\, \rho\,d\rho  =0
\end{align}
when $m_1\not=m_2$, as well as \eqref{eq:besselrecurrence1}.

Within the $\rho$-integral at the end of \eqref{eq:prod12} we find
\begin{align*}
  \int_0^1  \big( r J_n'(j_1 \rho) J_n'(j_2 \rho) +
   \frac{n^2}{j_1j_2} \rho^{-1}
    J_n(j_1 \rho) J_n(j_2 \rho) \big) \,d\rho ,
\end{align*}
which by \eqref{eq:besselrecurrence1} is equal to
\begin{align*}
  &=  \frac{1}{4}\int_0^1\big( ( J_{n-1}(j_1\rho )- J_{n+1}(j_1\rho )) 
  (J_{n-1}(j_2\rho )- J_{n+1}(j_2\rho ))  \\&\quad \quad+
  ( J_{n-1}(j_1\rho )+J_{n+1}(j_1\rho ))(
  J_{n-1}(j_2\rho )+ J_{n+1}(j_2\rho )) \big)\,\rho\,d\rho  \\
  &= \frac{1}{2}\int_0^1 (J_{n-1}(j_1\rho )J_{n-1}(j_2\rho )
  + J_{n+1}(j_1\rho )J_{n+1}(j_2\rho ) )\,\rho\,d\rho .
\end{align*}
Also in \eqref{eq:prod12} we have
\begin{align*}
  & \int_0^1\big( \frac{n}{j_2}   J_n'(j_1 \rho) J_n(j_2 \rho) 
   + \frac{n}{j_1}  J_n(j_1 \rho) J_n'(j_2 \rho) \big)   \, d\rho 
\\
  &\quad = \frac{1}{2}\int_0^1 \big( (J_{n-1}(j_1\rho )J_{n-1}(j_1\rho ) -
    J_{n+1}(j_1\rho )J_{n+1}(j_1\rho ) \big)  \,\rho\,d\rho ,
\end{align*}
again with the help of  \eqref{eq:besselrecurrence1}.
Combining these we find that
\begin{align} \label{eq:prodproof0}
\langle F_{n,m_1},\  F_{n,m_2} \rangle_{\R} &=
  2\pi \int_0^1\big( J_n(j_1\rho )J_n(j_2\rho ) +
   J_{n-1}(j_1\rho )J_{n-1}(j_2\rho ) \big)\,\rho\,d\rho ,
\end{align}
which is zero by \eqref{eq:besselorthog} as we are assuming $m_1\not=m_2$.

Similarly, straightforward computations yield that the vector part of
the quaternionic inner product is
\begin{align} \label{eq:prodproof1}
&{\rm Vec} \langle F_{n_1,m_1},\  F_{n_2,m_2} \rangle_{\H} \nonumber \\
   &= \left(\int_0^1\big( J_{n_1-1}(j_1\rho )J_{n_2}(j_2\rho ) -
   J_{n_1}(j_1\rho )J_{n_2-1}(j_2\rho) \big)\,\rho\,d\rho  
   \int_0^{2\pi} \Phi^+_{n_1+n_2-1}\,d\theta \right) \ei \nonumber \\
   &\quad + \left( \int_0^1\big( J_{n_1}(j_1\rho )J_{n_2}(j_2\rho ) +
   J_{n_1-1}(j_1\rho )J_{n_2-1}(j_2\rho ) \big)\,\rho\,d\rho  
   \int_0^{2\pi} \Phi^-_{n_1+n_2-1}\,d\theta \right) \ej \nonumber \\
   &\quad + \left(\int_0^1\big( J_{n_1-1}(j_1\rho )J_{n_2}(j_2\rho ) -
   J_{n_1}(j_1\rho )J_{n_2-1}(j_2\rho) \big)\,\rho\,d\rho  
   \int_0^{2\pi} \Phi^-_{n_1\--n_2}\,d\theta \right) \ek.
\end{align}
Analogous computations involving the remaining three components show that
${\rm Vec} \langle F_{n_1,m_1},\ F_{n_2,m_2} \rangle_{\H} = 0$
whenever $(n_1,m_1)\not=(n_2,m_2)$ and
$\{n_1,n_2\}\not=\{0,1\}$, which establishes \eqref{eq:almostorthog1}, while
\eqref{eq:almostorthog2}--\eqref{eq:almostorthog22} follow directly
from \eqref{eq:prodproof0}--\eqref{eq:prodproof1}.

\bigskip

\noindent\textit{(ii) Norms.} In \cite[p.\ 303]{JeffreyDai2008} we have
for any $\lambda$ that
\begin{align*}
  2 \int_{\rho_1}^{\rho_2} J_n(\lambda\rho )^2 \,\rho\,d\rho  &=
  \rho^2(J_n(\lambda\rho )^2-J_{n-1}(\lambda\rho )J_{n+1}(\lambda\rho ))\,|_{\rho_1}^{\rho_2} .
\end{align*}
Thus with $\lambda=j_{n,m}$, \eqref{eq:prodproof0}--\eqref{eq:prodproof1} specialize to
\begin{equation*}
\|F_{n,m}\|_2^2   = 2\pi \int_0^1 \big( \rho J_{n}(j_{n,m} \rho)^2
    + \rho J_{n-1}(j_{n,m} \rho)^2 \big) \,d\rho.
\end{equation*}
which gives \eqref{eq:norms}.

\vspace{1ex}
\noindent\textit{(iii) Completeness.} Now fix $\lambda$ and suppose that
$f=f_0+f_1\ei+f_2\ej+f_2\ek\in\M_2(\Omega_0;\lambda)$ is orthogonal to every
$F_{n,m}$ in the sense of \eqref{eq:InnerProduct}. Since every well defined function in $\Omega_0$ is periodic in $\theta$ in polar coordinates, it follows from Definition \ref{Definition_RQM functions} that
\begin{align}
 0 &= \langle  f, F_{n,m} \rangle_{\R} \nonumber \\
  &= \frac{1}{\lambda} \int_0^{2\pi}\!\!\!\int_0^{1}
    \left( \frac{\partial f_1}{\partial\rho } \, \rho \, J_n'(j_{n,m}\rho ) \Phi^+_{n} + \frac{n}{j_{n,m}} \frac{\partial f_1}{\partial\rho }J_n(j_{n,m}\rho ) \Phi^+_{n}
    - \frac{\partial f_1}{\partial \theta}J_n'(j_{n,m}\rho ) \Phi^-_{n} \right. \nonumber \\
  &\qquad \qquad \qquad - \frac{n}{j_{n,m}} \, \rho^{-1} \frac{\partial f_1}{\partial \theta}J_n(j_{n,m}\rho ) \Phi^-_{n} + \frac{\partial f_2}{\partial\rho } \, \rho \, J_n'(j_{n,m}\rho ) \Phi^-_{n} \nonumber \\
  &\qquad \qquad \qquad + \frac{n}{j_{n,m}} \frac{\partial f_2}{\partial\rho }J_n(j_{n,m}\rho ) \Phi^-_{n} +  \frac{\partial f_2}{\partial \theta}J_n'(j_{n,m}\rho ) \Phi^+_{n} \nonumber \\
  &\qquad \qquad \qquad \left. + \frac{n}{j_{n,m}} \, \rho^{-1} \frac{\partial f_2}{\partial \theta}J_n(j_{n,m}\rho ) \Phi^+_{n} \right) d\rho  \, d\theta \nonumber \\
  &\quad + \int_0^{2\pi}\!\!\!\int_0^{1} f_1 J_n(j_{n,m}\rho )\Phi^+_{n} \,\rho \, d\rho  \, d\theta + \int_0^{2\pi}\!\!\!\int_0^{1} f_2 J_n(j_{n,m}\rho )\Phi^-_{n} \,\rho \, d\rho  \, d\theta. \label{Integral_proof}
\end{align}
We apply integration by parts to the second and third terms of the first integral:
\begin{align*}
&\int_0^{2\pi}\!\!\!\int_0^{1} \frac{n}{j_{n,m}} \frac{\partial f_1}{\partial\rho }J_n(j_{n,m}\rho ) \Phi^+_{n} \, d\rho  \, d\theta
   - \int_0^{2\pi}\!\!\!\int_0^{1} \frac{\partial f_1}{\partial \theta}J_n'(j_{n,m}\rho ) \Phi^-_{n} \, d\rho  \, d\theta \\
&= \frac{1}{j_{n,m}} \int_0^{1} J_n(j_{n,m}\rho ) \left( \int_0^{2\pi} \frac{\partial f_1}{\partial\rho } ( \Phi^-_{n})' d\theta\right) d\rho  
- \int_0^{2\pi} \Phi^-_{n} \left( \int_0^{1} \frac{\partial f_1}{\partial\rho } J_n'(j_{n,m}\rho ) d\rho  \right) d\theta \\
&= -\frac{1}{j_{n,m}} \int_0^{2\pi}\!\!\!\int_0^{1} J_n(j_{n,m}\rho ) \frac{\partial^2 f_1}{\partial \theta \partial\rho } \Phi^-_{n} d\rho  \, d\theta 
+ \frac{1}{j_{n,m}} \int_0^{2\pi}\!\!\!\int_0^{1} J_n(j_{n,m}\rho ) \frac{\partial^2 f_1}{\partial\rho  \partial \theta} \Phi^-_{n} d\rho  \, d\theta \\
&= 0
\end{align*}
when $n > 0$ by \eqref{eq:bessel0}, and for the sixth and seventh terms,
\begin{equation*}
\int_0^{2\pi}\!\!\!\int_0^{1} \frac{n}{j_{n,m}} \frac{\partial f_2}{\partial\rho }J_n(j_{n,m} \rho ) \Phi^-_{n} \, d\rho  \, d\theta
   + \int_0^{2\pi}\!\!\!\int_0^{1} \frac{\partial f_2}{\partial \theta}J_n'(j_{n,m} \rho ) \Phi^+_{n} \, d\rho  \, d\theta = 0
\end{equation*}
also when $n > 0$. Integrating the remaining integrals by parts, we have
\begin{align*}
0 = \langle  f, F_{n,m} \rangle_{\R}
&= - \frac{1}{j_{n,m}^2} \int_0^{2\pi} \Phi^+_{n} \left( \int_0^{1} (\Delta_{\rho,\theta} f_1) J_n(j_{n,m} \rho ) \, \rho \, d\rho  \right) d\theta \\
   &\quad - \frac{1}{j_{n,m}^2} \int_0^{2\pi} \Phi^-_{n} \left( \int_0^{1} (\Delta_{r,\theta} f_2) J_n(j_{n,m} \rho ) \, \rho \, d\rho  \right) d\theta \\
  &\quad + \int_0^{2\pi}\!\!\!\int_0^{1} f_1 J_n(j_{n,m} \rho )\Phi^+_{n} \, \rho \, d\rho  \, d\theta \\
  &\quad+ \int_0^{2\pi}\!\!\!\int_0^{1} f_2 J_n(j_{n,m} \rho )\Phi^-_{n} \, \rho\, d\rho  \, d\theta
\end{align*}
when $n > 0$.

Since $f_1$, $f_2$ are $\lambda$-metaharmonic,
\begin{align*}
 (\lambda^2& + j_{n,m}^2) \left(\int_0^{2\pi}\!\!\!\int_0^{1} f_1 J_n(j_{n,m} \rho )\Phi^+_{n} \, \rho\, d\rho  \, d\theta + \int_0^{2\pi}\!\!\!\int_0^{1} f_2 J_n(j_{n,m} \rho )\Phi^-_{n} \, \rho\, d\rho  \, d\theta \right) \\
&= 0
\end{align*}
when $n > 0$. Similar arguments using the $\ei$, $\ej$, and $\ek$ components enable
one to show that in fact
\begin{align*}
  \int_0^{2\pi}\!\!\!\int_0^{1} f_1 J_n(j_{n,m} \rho)\Phi^-_{n} \, \rho \, d\rho \, d\theta &=0,\\
  \int_0^{2\pi}\!\!\!\int_0^{1} f_2 J_n(j_{n,m} \rho)\Phi^+_{n} \, \rho \, d\rho \, d\theta &= 0
\end{align*}
for $n>0$. By the completeness of the set $\{J_n(j_{n,m} \rho)\Phi^{\pm}_{n}\}$ in $L^2(\Omega_0,\R)$ it follows that $f_1$ and $f_2$ are in the linear span
of $\{J_0(j_{0,m} \rho)\Phi^{\pm}_{0}\}$. Since $\Phi^+_0=1$,  $\Phi^-_{0}=0$, we have the series representations
\begin{align}
  f_1 = \sum_{m=1}^\infty c_{1,m}J_0(j_{0,m}\rho), \quad
  f_2 = \sum_{m=1}^\infty c_{2,m}J_0(j_{0,m}\rho), \quad
\end{align}
converging in $L^2$ for real constants $c_{1,m}$, $c_{2,m}$. By
Proposition \ref{prop:f1f2},
\begin{align*}
  f_0 &= \frac{1}{\lambda}\bigg(\frac{\partial}{\partial x}
      \sum_{m=1}^\infty c_{1,m}J_0(j_{0,m}\rho) +
        \frac{\partial }{\partial y}  \sum_{m=1}^\infty c_{2,m}J_0(j_{0,m}\rho) \bigg)\\
    &= \sum_{m=1}^\infty \frac{j_{0,m}}{\lambda} J_0'(j_{0,m}\rho) (c_{1,m}\cos\theta+c_{2,m}\sin\theta),\\
  f_3 &= \frac{1}{\lambda} \bigg(\frac{\partial}{\partial y}
      \sum_{m=1}^\infty c_{1,m}J_0(j_{0,m}\rho) -
        \frac{\partial }{\partial x}  \sum_{m=1}^\infty c_{2,m}J_0(j_{0,m}\rho) \bigg)\\
    &= \sum_{m=1}^\infty \frac{j_{0,m}}{\lambda} J_0'(j_{0,m}\rho) (-c_{2,m}\cos\theta+c_{1,m}\sin\theta).
\end{align*}
Let $m'\ge1$. Using these series representations, first we look at the
scalar part of the hypothesis
\begin{align*}
  0 &= \langle F_{0,m'},\ f\rangle_\H \\
  &= \langle \ -J_1(j_{0,m'}\rho)\cos\theta+ J_0(j_{0,m'}\rho)\ei - J_1(j_{0,m'}\rho)\sin\theta \ek
  ,\\
   &\qquad \sum_{m=1}^\infty \frac{j_{0,m}}{\lambda} J_0'(j_{0,m}\rho) (c_{1,m}\cos\theta+c_{2,m}\sin\theta) + f_1\ei + f_2\ej \\
   &\qquad\qquad + \big(\sum_{m=1}^\infty \frac{j_{0,m}}{\lambda} J_0'(j_{0,m}\rho) (-c_{2,m}\cos\theta+c_{1,m}\sin\theta)\big) \ek \ \rangle_\H =0.
\end{align*}
By $L^2$ convergence, 
\begin{align*}
  0 &= -\sum_{m} c_{1,m} \frac{j_{0,m}}{\lambda} \int_0^{1} \rho J_1(j_{0,m'}\rho)J_0'(j_{0,m}\rho)\,d\rho
      \int_0^{2\pi} \cos^2\theta\,d\theta \\
  &\quad - \sum_{m}c_{2,m} \frac{j_{0,m}}{\lambda} \int_0^{1} \rho J_1(j_{0,m'}\rho)J_0'(j_{0,m}\rho)\,d\rho
    \int_0^{2\pi} \cos\theta\sin\theta\,d\theta \\
  &\quad -\int_0^{2\pi} \!\!\! \int_0^{1} J_0(j_{0,m'}\rho)f_1\,\rho\,d\rho d\theta \\
  &\quad -\sum_{m}c_{2,m} \frac{j_{0,m}}{\lambda} \int_0^1 \rho J_1(j_{0,m'}\rho)J_0'(j_{0m}\rho)\,d\rho
    \int_0^{2\pi} \sin\theta\cos\theta\,d\theta \\
  &\quad + \sum_{m} c_{1,m} \frac{j_{0,m}}{\lambda} \int_0^1\rho J_1(j_{0,m'}\rho)J_0'(j_{0m}\rho)\,d\rho
      \int_0^{2\pi}\sin^2\theta\,d\theta.
\end{align*}
Thus $f_1$ is orthogonal to $J_{0,m'}$ and hence is orthogonal in fact
to all $J_{n,m}\Phi^\pm_n$, which implies $f_1=0$. When one expands
the $\ek$ component of the inner product it is seen similarly that
$f_2=0$. In consequence, $f = 0$ identically as required.
\end{proof}

The information \eqref{eq:almostorthog2}--\eqref{eq:almostorthog22}
permits one to orthogonalize (say via the Gram-Schmidt process) the
subspace generated by $\{F_{0,m}, F_{1,m} \colon m\ge1\}$, which by
\eqref{eq:almostorthog1} will combine with the remaining $F_{n,m}$ to
give a full orthogonal basis. The resulting functions are not
particularly interesting, so we will omit the details.


\section{Time-dependent  solutions\label{sec:application}}

Consider the partial differential equation
\begin{align} \label{eq:timehelmholtz}
(\Delta + K^2\frac{\partial^2}{\partial t^2})v=0   
\end{align}
for $v(z,t)\in\H$, $z\in\Omega_0$, $t\ge0$. This can be interpreted
as a wave equation using imaginary time $it$.
(cf.\ the Wick transformation \cite{Burgess2003}).
 
We consider the natural quaternionic extensions of the real-valued
solutions of
\eqref{eq:timehelmholtz}. Since
\begin{align}\label{eq:timefactorize}
  \big(D + K\frac{\partial}{\partial t} \big )
  \big(D - K\frac{\partial}{\partial t} \big ) = -(\Delta^2+K^2),
 \end{align}
we are led to consider the companion equation
\begin{align}    \label{eq:timemt}
  \big(D + K\frac{\partial}{\partial t} \big )v=0 .
 \end{align}
 Since the operator $\Delta + K^2 (\partial^2/\partial t^2)$ has only
 real ingredients, it operates independently on each component of
 $v=v_0 +v_1\ei + v_2\ej + v_3\ek$.

\begin{figure}[b!]
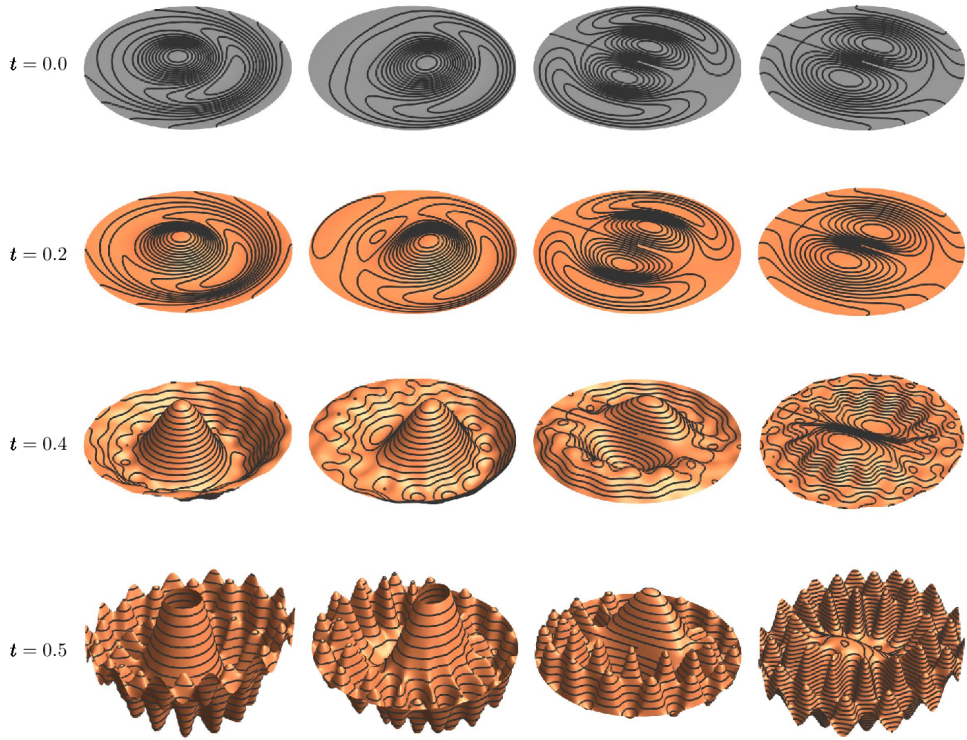

 \pic{figwaves}{.6,-15.}{9.6}{scale=.26}
 \caption{The initial condition ($t=0$, top row) contains high order terms which are not visible in the graphics until approximately $t>0.3$, when the exponential terms in time in \eqref{eq:totalwavefunction} become sufficiently large. \label{figwaves}}
\end{figure}

\def\uv{\underline{v}}
 
Because of \eqref{eq:Fnmproperty}, a time-dependent function given by
a series of the form
\begin{equation} \label{eq:totalwavefunction}
v(z,t) = \sum_{n=0}^{\infty} \sum_{m=1}^{\infty} F_{n,m}(z)\, c_{n,m} \, e^{j_{n,m} t}   
\end{equation} 
converging in $L^2(\Omega_0)$ clearly satisfies  \eqref{eq:timemt}.
One may propose a boundary value problem for this equation
with an initial condition given by an arbitrary $\uv(z) \in \M_2(\Omega_0;\lambda)$,
whose coefficients $c_{n,m}\in\H$ are given according to Theorem \ref{Main_theorem},
\begin{align}  \label{eq:vexpansion}
  \uv(z) = \sum_{n=0}^\infty\sum_{m=1}^\infty F_{n,m}(z) c_{n,m}. 
\end{align}
(In fact, one may prescribe only $\uv_0$ and $\uv_3$ in $\Ker(\Delta+\lambda^2)$
according to Proposition \ref{prop:f1f2}).
It is not difficult to show by means of the Cauchy-Kovalevskaya theorem
\cite{Petrovsky1954} that this is the only real-analytic solution of
\eqref{eq:timemt} satisfying the initial conditions $v(z,0)=\uv(z)$ and
\begin{align} \label{eq:vtime0}
   \left.\frac{\partial}{\partial t} \right|_{t=0} v_0(z,t) =
   \sum_{n=0}^\infty \sum_{m=1}^\infty j_{n,m} \, F_{n,m}(z) \, c_{n,m}.
\end{align}
(A similar result for reduced-quaternion-valued functions in elliptical
domains is worked out in detail in \cite{MoraisPorter2021}).

An example of the evolution of a wave function \eqref{eq:vexpansion}
is given in Figure \ref{figwaves}.
 

\section{Acknowledgments}

The first author's work was supported by the Asociaci\'on Mexicana de
Cultura, A.\ C.


 \newcommand{\authors}[1]{\textsc{#1}}
\newcommand{\booktitle}[1]{\textit{#1}}
\newcommand{\articletitle}[1]{``#1''}
\newcommand{\journalname}[1]{\textit{#1}}
\newcommand{\volnum}[1]{\textbf{#1}}

\end{document}